\newtheorem{thm}{Theorem}
\newtheorem{lem}[thm]{Lemma}
\theoremstyle{definition}
\theoremstyle{remark}
\numberwithin{equation}{section}
\def\re{\mathbb{R}}
\def\N{\mathbb{N}}
\def\({\left(}
\def\){\right)}
\def\la{\lambda}
\begin{document}

\title[A simple proof of attainability for the Sobolev inequality]{A simple proof of attainability \\for the Sobolev inequality}

\author{Megumi Sano}
\address{Laboratory of Mathematics, School of Engineering,
Hiroshima University, Higashi-Hiroshima, 739-8527, Japan}
%\curraddr{}
\email{smegumi@hiroshima-u.ac.jp}
%\thanks{}

\subjclass[2020]{Primary 47J30; Secondary 46B50.}
%47J30 Variational methods involving nonlinear operators
%46B50 Compactness in Banach (or normed) spaces

\keywords{Sobolev inequality, the best constant, cocompactness}
\date{\today}

\dedicatory{}

\begin{abstract}
We give a simple proof of the existence of a minimizer for the Sobolev inequality. Our proof is based on a representation formula via a cut-off fundamental solution. 
\end{abstract}

\maketitle

\section{Introduction}\label{S Intro}

Let $1<p<N, p^* = \frac{Np}{N-p}$ and $\dot{W}^{1,p}(\re^N)=\{ u \in L^{p^*} (\re^N) \mid  |\nabla u| \in L^p (\re^N) \}$. The Sobolev inequality states that
\begin{align*}
S_p \( \int_{\re^N} |u|^{p^*} \,dx \)^{\frac{p}{p^*}} \le \int_{\re^N} |\nabla u|^p \,dx \,\,\text{for all}\,u \in \dot{W}^{1,p}(\re^N),
\end{align*}
where $S_p$ is the Sobolev best constant which is given by
\begin{align*}
S_p &= \inf \left\{ \,\|\nabla u\|_p^p \,\,\Bigr| \,\, u \in \dot{W}^{1,p}(\re^N), \,\| u \|_{p^*} =1  \right\}\\
&= \inf \left\{ \,\|\nabla u\|_p^p \,\,\Bigr| \,\, u \in \dot{W}^{1,p}_{\rm rad}(\re^N), \,\| u \|_{p^*} =1  \right\},
\end{align*}
where we used the rearrangement argumet and $\dot{W}_{\rm rad}^{1,p}(\re^N)=\{\, u \in \dot{W}^{1,p} (\re^N) \mid u(x)= u(|x|) \,\,\text{is radial.} \}$.
In this short note, we give a simple proof of the following theorem.

\begin{thm}\label{Thm:Sob}
$S_p$ is attained.
\end{thm}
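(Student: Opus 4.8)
The plan is to reduce the non-compactness to the single remaining scaling symmetry and to remove it by a cocompactness argument built on the representation formula. Using the rearrangement reduction already recorded above, I would fix a minimizing sequence $\{u_n\}\subset\dot{W}^{1,p}_{\rm rad}(\re^N)$ with $u_n\ge0$ radially nonincreasing, $\|u_n\|_{p^*}=1$ and $\|\nabla u_n\|_p^p\to S_p$. The only symmetry that survives on the radial class is the dilation $T_\lambda u(x)=\lambda^{(N-p)/p}u(\lambda x)$, which leaves both $\|\nabla\cdot\|_p$ and $\|\cdot\|_{p^*}$ invariant, so a minimizing sequence may concentrate at the origin or spread to infinity and the whole difficulty is to normalize this scale. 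The goal is to prove that $\dot{W}^{1,p}_{\rm rad}(\re^N)\hookrightarrow L^{p^*}(\re^N)$ is \emph{cocompact} with respect to $\{T_\lambda\}_{\lambda>0}$, to extract from $\{u_n\}$ a nontrivial profile after rescaling, and then to show by a strict--subadditivity argument that this profile is an actual minimizer.

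The representation formula is what makes the cocompactness elementary. For a radial $u$, pairing $-u'$ against the fundamental solution cut off at radius $r$ and applying Hölder gives the pointwise bound
\[
|u(r)|\le C(N,p)\,r^{-(N-p)/p}\Big(\int_{|x|>r}|\nabla u|^p\,dx\Big)^{1/p}.
\]
More usefully, passing to the logarithmic variable $t=\log r$ and the scale--invariant profile $\phi(t)=e^{(N-p)t/p}u(e^t)$ converts the two functionals into the translation--invariant expressions
\[
\|u\|_{p^*}^{p^*}=\omega_{N-1}\int_{\re}\phi^{p^*}\,dt,\qquad \|\nabla u\|_p^p=\omega_{N-1}\int_{\re}\big|\phi'-\tfrac{N-p}{p}\phi\big|^p\,dt,
\]
so that $\dot{W}^{1,p}_{\rm rad}$ becomes an ordinary first--order Sobolev space on the line and dilations become translations $t\mapsto t+s$. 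Writing $\psi=\phi'-\frac{N-p}{p}\phi$ and integrating the resulting linear ODE yields the convolution representation $\phi=G*(-\psi)$ with the cut--off fundamental kernel $G(\tau)=e^{-\frac{N-p}{p}\tau}\mathbf{1}_{\{\tau\ge0\}}\in L^1(\re)$; Young's inequality then upgrades the bounded energy into a uniform bound for $\{\phi_n\}$ in $W^{1,p}(\re)$ together with local equicontinuity.

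With this in hand the cocompactness step is a one--dimensional vanishing alternative: if $\sup_{s\in\re}\int_{s-1}^{s+1}\phi_n^p\,dt\to0$ then $\|\phi_n\|_{p^*}\to0$, contradicting $\|\phi_n\|_{p^*}^{p^*}=\omega_{N-1}^{-1}$. Hence, after a translation $t\mapsto t-t_n$ — equivalently a rescaling $v_n:=T_{\lambda_n}u_n$ — the profiles converge weakly in $W^{1,p}(\re)$ and locally uniformly to some $\phi\ne0$, i.e. $v_n\rightharpoonup w\ne0$ in $\dot{W}^{1,p}_{\rm rad}(\re^N)$ and a.e. Since $\{v_n\}$ is still minimizing, I would finish by excluding dichotomy: the Brezis--Lieb lemma splits $1=\|v_n\|_{p^*}^{p^*}=\|w\|_{p^*}^{p^*}+\|v_n-w\|_{p^*}^{p^*}+o(1)$, the Sobolev inequality applied separately to $w$ and to $v_n-w$ gives $\|\nabla w\|_p^p\ge S_p a^{p/p^*}$ and $\|\nabla(v_n-w)\|_p^p\ge S_p b_n^{p/p^*}$ with $a=\|w\|_{p^*}^{p^*}\in(0,1]$ and $b_n=\|v_n-w\|_{p^*}^{p^*}\to1-a$, and the strict concavity of $s\mapsto s^{p/p^*}$ (note $0<p/p^*<1$) forces $a=1$. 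Thus $v_n\to w$ in $L^{p^*}$ with $\|w\|_{p^*}=1$ and $\|\nabla w\|_p^p\le S_p$, so $w$ attains $S_p$.

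I expect the genuine obstacle to be the splitting of the \emph{gradient} term, since $\nabla v_n\rightharpoonup\nabla w$ only weakly and the $p$--energy is nonlinear: weak lower semicontinuity alone gives $\|\nabla w\|_p^p\le\liminf\|\nabla v_n\|_p^p$, but the additivity $\|\nabla v_n\|_p^p\ge\|\nabla w\|_p^p+\|\nabla(v_n-w)\|_p^p+o(1)$ used above needs either a.e. convergence of the gradients or the convexity inequality $|b|^p\ge|a|^p+p|a|^{p-2}a\cdot(b-a)$ together with $|\nabla v_n-\nabla w|^{p-2}(\nabla v_n-\nabla w)\rightharpoonup0$. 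In the radial, one--dimensional picture this is exactly where the representation $\phi_n=G*(-\psi_n)$ pays off a second time, controlling the profiles well enough to pass to the limit; securing this compactness of the gradients is the step I would expect to demand the most care.
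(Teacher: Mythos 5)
Your overall architecture coincides with the paper's: cocompactness of the radial embedding obtained from the representation formula (your logarithmic substitution $\phi(t)=e^{(N-p)t/p}u(e^t)$, under which dilations become translations and the cut-off fundamental solution becomes the convolution kernel $G$, is an equivalent repackaging of the paper's pairing with $m_t$), followed by extraction of a nonzero rescaled weak limit, the Brezis--Lieb splitting of the $L^{p^*}$ norm, and strict subadditivity of $s\mapsto s^{p/p^*}$. Up to that point the argument is sound and is essentially the paper's.

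The genuine gap is exactly the step you flag at the end: the energy splitting $\|\nabla v_n\|_p^p\ge\|\nabla w\|_p^p+\|\nabla(v_n-w)\|_p^p+o(1)$ for $p\neq 2$, and neither of your proposed fixes closes it. The convexity inequality $|b|^p\ge|a|^p+p|a|^{p-2}a\cdot(b-a)$ with $a=\nabla(v_n-w)$, $b=\nabla v_n$ produces only $\|\nabla v_n\|_p^p\ge\|\nabla(v_n-w)\|_p^p+o(1)$, and with $a=\nabla w$ only $\|\nabla v_n\|_p^p\ge\|\nabla w\|_p^p+o(1)$; you cannot harvest both terms simultaneously this way, and the auxiliary hypothesis $|\nabla v_n-\nabla w|^{p-2}(\nabla v_n-\nabla w)\rightharpoonup0$ is itself not a consequence of $\nabla v_n\rightharpoonup\nabla w$, since $z\mapsto|z|^{p-2}z$ is not weakly sequentially continuous for $p\neq2$. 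Likewise, the hope that the representation $\phi_n=G*(-\psi_n)$ ``pays off a second time'' is unfounded: convolution against a fixed $L^1$ kernel controls the functions $\phi_n$, not the derivatives $\psi_n$, and weak $L^p$ convergence of $\psi_n$ cannot be upgraded to a.e.\ or strong convergence by such smoothing. What is actually needed is a.e.\ convergence of the gradients, so that the Brezis--Lieb lemma applies to $|\nabla v_n|^p$; this is where the paper invests real work: it first replaces the minimizing sequence by an almost-critical sequence via Ekeland's variational principle (so that $J'(v_k)\to 0$ in the dual space, at negligible cost in energy), and then runs the Boccardo--Murat truncation argument with test functions $\phi_R\,T_\eta(v_k-u)$ to deduce $\nabla v_k\to\nabla u$ a.e. Without this step (or an equivalent substitute), your final strict-concavity argument does not go through, and the proof is incomplete precisely at its decisive point.
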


Theorem \ref{Thm:Sob} was firstly shown by Aubin \cite{Au} and Talenti \cite{T} based on rerrangement argument and the Hilbert invariant integral, see also \cite{Bliss, Al simple}. Other than that, Theorem \ref{Thm:Sob} was shown based on Concentration Compactness Principle via probability measures, see \cite{LionsI,Struwe}, based on blow-up analysis via a minimizer for the subcritical problem 
$$S(q)=\inf \left\{ \,\|\nabla u\|_2^2 \,\,\Bigr| \,\, u \in W_0^{1,2}(B_1), \,\| u \|_{q} =1  \right\}\,(q<2^*),$$
 see \cite{CC}, based on mass transportation approach via Brenier's map, see \cite{CNV} and based on Bellman's function method, see \cite{O}. 

%%%%%%%%%%%%%%%%%%%%%%%%%%%%%%%%%%%%
\section{Cocompactness of the Sobolev embedding}\label{S emb}

It is well-known that the embedding $\dot{W}_{\rm rad}^{1,p} \subset L^{p^*}$ is not compact, that is, there exists a noncompact sequence $\{ u_k \}_{k \in \N} \subset \dot{W}_{\rm rad}^{1,p}(\re^N)$ such that $u_k \rightharpoonup 0$ in $\dot{W}_{\rm rad}^{1,p}(\re^N)$ as $k \to \infty$, but $u_k \not\to 0$ in $L^{p^*}(\re^N)$. Indeed, we can check it by using $u_k(|x|)= k^{\frac{N-p}{p}}u(k|x|)$ for smooth radial function $u$ with ${\rm supp}\, u \subset B_1 \subset \re^N$. 
Noncompactness makes analysis for variational problems like $S_p$ harder in general.  
However, the embedding $\dot{W}^{1,p}_{\rm rad} \subset L^{p^*}$ becomes {\it cocompact}. Cocompactness is weaker propoerty than compactness, but it is useful to show Theorem \ref{Thm:Sob}. For the details, see e.g. \cite{AOT, AT}. In this section, we give a simple proof of cocompactness of the embedding $\dot{W}^{1,p}_{\rm rad} \subset L^{p^*}$. Set the scaling $g_\la$ as follows.
\begin{align*}
g_{\la} u (x) 
= \la^{\frac{N-p}{p}} u (\la x) \quad (\la >0, \, x \in \re^N)
\end{align*}

\begin{lem}\label{Cocpt}(Cocompactness of the embedding $\dot{W}_{\rm rad}^{1,p} \subset L^{p^*}$)
Let $\{ u_k \}_{k \in \N} \subset \dot{W}_{\rm rad}^{1,p}(\re^N)$ satisfy $g_{\la_k} u_k \rightharpoonup 0$ in $\dot{W}_{\rm rad}^{1,p}(\re^N)$ for any $\la_k >0$. Then $u_k \to 0$ in $L^{p^*}(\re^N)$. 
\end{lem}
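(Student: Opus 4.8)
The plan is to reduce the scaling--cocompactness statement to a translation--cocompactness (vanishing) statement on the real line by an Emden--Fowler change of variables, the borderline integrability at the origin being absorbed by Hardy's inequality. First I would record the radial representation formula and the pointwise bound it yields: for $u \in \dot{W}^{1,p}_{\rm rad}(\re^N)$ one has $u(r)=-\int_r^\infty u'(s)\,ds$, and applying H\"older with the weight $s^{(N-1)/p}$, using $1<p<N$ to guarantee convergence of $\int_r^\infty s^{-(N-1)p'/p}\,ds$, gives
\[
|u(r)| \le C_N\, r^{-\frac{N-p}{p}}\,\|\nabla u\|_p ,
\]
an estimate invariant under $g_\la$. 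This is the representation through the cut-off fundamental solution alluded to in the abstract.

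Next I would introduce $v(s)=e^{\frac{N-p}{p}s}u(e^s)$ with $s=\log r$, and set $c=\frac{N-p}{p}$. A direct computation shows that, with $\omega_{N-1}$ the surface measure of the unit sphere,
\[
\|u\|_{p^*}^{p^*}=\omega_{N-1}\,\|v\|_{L^{p^*}(\re)}^{p^*}, \qquad \|\nabla u\|_p^p=\omega_{N-1}\int_{\re}\bigl|v'-c\,v\bigr|^p\,ds ,
\]
while $g_\la$ becomes the translation $v(\cdot)\mapsto v(\cdot+\log\la)$. Crucially, Hardy's inequality $\int_0^\infty r^{N-p-1}|u|^p\,dr\le C\int_0^\infty r^{N-1}|u'|^p\,dr$ reads $\|v\|_{L^p(\re)}\le C\|\nabla u\|_p$, so that together with the energy identity the sequence $\{v_k\}$ is bounded in $W^{1,p}(\re)$ and the norms $\|v'-c\,v\|_p$ and $\|v\|_{W^{1,p}(\re)}$ are equivalent on this class. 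Hence weak convergence in $\dot{W}^{1,p}_{\rm rad}(\re^N)$ is equivalent to weak convergence in $W^{1,p}(\re)$, and the hypothesis says precisely that every translate $v_k(\cdot+t_k)$ converges weakly to $0$ in $W^{1,p}(\re)$, for every real sequence $t_k$.

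Finally I would argue by contradiction using the one-dimensional vanishing lemma. Suppose $u_k\not\to 0$ in $L^{p^*}$; then, along a subsequence, $\|v_k\|_{L^{p^*}(\re)}\ge \delta_0>0$. By Lions' vanishing lemma the quantity $\sup_{t\in\re}\int_t^{t+1}|v_k|^p\,ds$ cannot tend to $0$, so there are points $t_k$ with $\int_{t_k}^{t_k+1}|v_k|^p\,ds\ge \beta>0$. The translates $v_k(\cdot+t_k)$ are bounded in $W^{1,p}(\re)$ and, by hypothesis, converge weakly to $0$; the compact embedding $W^{1,p}((0,1))\hookrightarrow L^p((0,1))$ then forces $v_k(\cdot+t_k)\to 0$ strongly in $L^p((0,1))$, contradicting $\int_0^1|v_k(\cdot+t_k)|^p\,ds\ge \beta$. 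Therefore $u_k\to 0$ in $L^{p^*}(\re^N)$.

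The main obstacle is the behavior at the origin, equivalently as $s\to-\infty$: the critical exponent makes the radial pointwise bound exactly borderline non-integrable near $r=0$, so that estimate alone cannot control the $L^{p^*}$-mass concentrated there, and a naive rescaling cannot resolve concentration at the scaling-fixed point. The device that removes this difficulty is Hardy's inequality, which supplies the missing $L^p(\re)$ bound on $v$ and thereby places the problem squarely in the standard $W^{1,p}(\re)$ translation--cocompactness framework, where the vanishing lemma applies verbatim.
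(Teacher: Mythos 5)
Your proof is correct, but it takes a genuinely different route from the paper. The paper works directly in $\re^N$: it tests the weak convergence $g_{\la_k}u_k \rightharpoonup 0$ against the single explicit functional $w \mapsto \int |\nabla m_1|^{p-2}\nabla m_1\cdot\nabla w\,dx$, and the exact representation formula $\(\frac{N-p}{p-1}\)^{\frac{p-1}{p}}|\mathbb{S}^{N-1}|^{\frac{1}{p}}\,t^{\frac{N-p}{p}}w(t)=\int|\nabla m_t|^{p-2}\nabla m_t\cdot\nabla w\,dx$ together with $g_\la m_t = m_{t/\la}$ turns this into $\operatorname{ess\,sup}_x |x|^{\frac{N-p}{p}}|u_k(|x|)|\to 0$; it then concludes by the interpolation $\|u_k\|_{p^*}^{p^*}\le \(\sup |x|^{\frac{N-p}{p}}|u_k|\)^{p^*-p}\int |u_k|^p|x|^{-p}\,dx$ and Hardy. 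You instead conjugate by the Emden--Fowler transform $v(s)=e^{\frac{N-p}{p}s}u(e^s)$, use Hardy to show $u\mapsto v$ is a bounded isomorphism onto $W^{1,p}(\re)$ under which dilations become translations, and then invoke Lions' vanishing lemma plus the compact embedding $W^{1,p}((0,1))\hookrightarrow L^p((0,1))$ to rule out non-vanishing. In your picture the paper's argument reads: $\|v_k\|_{L^\infty(\re)}\to 0$ (obtained by pairing with one fixed dual element, namely the image of $m_1$), followed by $\|v_k\|_{p^*}^{p^*}\le\|v_k\|_\infty^{p^*-p}\|v_k\|_p^p$; so the two proofs diverge exactly at how the sup is controlled --- the paper by an explicit duality identity, you by a contradiction via concentration-compactness machinery. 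What your approach buys is conceptual clarity: it identifies the lemma as the classical translation-cocompactness of $W^{1,p}(\re)\subset L^q(\re)$, $q\in(p,\infty)$. What the paper's approach buys is self-containedness: no vanishing lemma is needed, which matters here since the stated purpose of the paper is a proof of attainability that bypasses concentration-compactness tools. Two small remarks: your opening pointwise bound $|u(r)|\le C_N r^{-\frac{N-p}{p}}\|\nabla u\|_p$ is never actually used in your argument and is not the same thing as the paper's representation formula (which is an exact identity, not an estimate); and the transfer of the weak-convergence hypothesis to $W^{1,p}(\re)$ only requires boundedness of the map $u\mapsto v$ (i.e., Hardy), which you do supply, so that step is sound.
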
 

\noindent
%Here we set the scaling $g_\la$ and the rearrangement $u^{\#} \colon \re^N \to [0, \infty]$ of $u$ as follows.
%\begin{align*}
%g_{\la} u (x) 
%&= \la^{\frac{N-p}{p}} u (\la x) \quad (\la >0, \, x \in \re^N)\\
%u^{\#}(x) 
%&= u^{\#}(|x|)=\inf \left\{ \tau >0 \,: \, | \{ y \in \re^N \, :\, |u(y)| > \tau \} \,| \le |B_{|x|}(0) | \right\}
%\end{align*}
%Note that $\|\nabla g_{\la} u \|_{p} = \| \nabla u \|_p, \| g_{\la} u \|_{p^{*}} = \| u \|_{p^{*}}, \|\nabla u^{\#} \|_{p} \le \| \nabla u \|_p$ and $\| u^{\#} \|_{q} = \| u \|_{q}$ for any $q \in [1, \infty]$. Furthermore, we can check that  $g_\la$ and $\#$ are commutative, that is, 
%\begin{align}\label{com}
%(g_\la u)^{\#}  = g_{\la} u^{\#}
%\end{align}
%see e.g. \cite[Proposition 2]{ST(Rellich)}. 
Following \cite{AOT, AT}, we introduce cut-off fundamental solution $m_t$ which is corresponding to Moser's sequence in the critical case $p=N$:
\begin{align*}
m_t (x)=m_t (|x|) = 
\begin{cases}
\( \frac{p-1}{N-p} \)^{\frac{p-1}{p}} | \mathbb{S}^{N-1}|^{-\frac{1}{p}} \, t^{-\frac{N-p}{p}} \quad &\text{if} \,\, |x| \in [0, t],\\
\( \frac{p-1}{N-p} \)^{\frac{p-1}{p}} | \mathbb{S}^{N-1}|^{-\frac{1}{p}} \,t^{\frac{N-p}{(p-1)p}} \,|x|^{-\frac{N-p}{p-1}}&\text{if} \,\, |x| \in (t, \infty)
\end{cases}
\end{align*}
We observe that $\| \nabla m_t \|_p =1$, for any $\la, t >0$ and $x \in \re^N$
\begin{align}\label{gm}
g_\la m_t (x)= m_{t/\la}(x)
\end{align}
and the following representation formula holds for a.e. $t>0$ and for all radial functions $w \in \dot{W}^{1,p}_{\rm rad} (\re^N)$. 
\begin{align}\label{rep}
\( \frac{N-p}{p-1} \)^{\frac{p-1}{p}} | \mathbb{S}^{N-1}|^{\frac{1}{p}} \,t^{\frac{N-p}{p}} w(t) = \int_{\re^N} |\nabla m_t (|x|) |^{p-2} \nabla m_t (|x|) \cdot \nabla w (|x|) \,dx
\end{align}
%Key points of the proof of Lemma \ref{Cocpt} are the represesntation formula (\ref{rep}) via $m_t$ and commutativity (\ref{com}) of the scaling $g_\la$ and the rearrangement $\#$.
This formula is a key of the proof of Lemma \ref{Cocpt}.

\begin{proof}(Lemma \ref{Cocpt})
Let $g_{\la_k} u_k \rightharpoonup 0$ in $\dot{W}^{1,p}_{\rm rad}(\re^N)$ for any $\la_k>0$. Then we see that $g_{\la_k} u_k \rightharpoonup 0$ also in $L^{p^*} (\re^N)$. 
(\ref{gm}) and (\ref{rep}) imply 
\begin{align*}
o(1)  &= \int_{\re^N} |\nabla m_1 (|x|) |^{p-2} \nabla m_1 (|x|) \cdot \nabla g_{\la_k} u_k (|x|) \,dx\\
&= \int_{\re^N} |\nabla g_{1/\la_k} m_1 (|x|) |^{p-2} \nabla g_{1/\la_k} m_1 (|x|) \cdot \nabla u_k (|x|) \,dx\\
&= \int_{\re^N} |\nabla m_{\la_k} (|x|) |^{p-2} \nabla m_{\la_k} (|x|) \cdot \nabla u_k (|x|) \,dx\\
&=\( \frac{N-p}{p-1} \)^{\frac{p-1}{p}} | \mathbb{S}^{N-1}|^{\frac{1}{p}} \,\la_k^{\frac{N-p}{p}} u_k (\la_k)
\end{align*}
for any $\la_k >0$. Thus, we have
\begin{align*}
\underset{x \in \re^N}{\rm ess.sup}\, |x|^{\frac{N-p}{p}}|u_k (|x|)| \to 0.
\end{align*}
Since $\{ u_k \}_{k \in \N} \subset \dot{W}^{1,p}_{\rm rad}(\re^N)$ is the bounded sequence, 
\begin{align*}
\| u_k \|_{p^*}^{p^*}
&\le \( \underset{x \in \re^N}{\rm ess.sup}\, |x|^{\frac{N-p}{p}}|u_k (|x|)| \)^{p^* -p} \int_{\re^N} \frac{|u_k |^{p}}{|x|^p} \,dx\\
&\le \( \underset{x \in \re^N}{\rm ess.sup}\, |x|^{\frac{N-p}{p}}|u_k (|x|)| \)^{p^* -p} \( \frac{p}{N-p} \)^p \| \nabla u_k \|_p^p \to 0
\end{align*}
where in the last inequality we used the Hardy inequality. Finally, we get $u_k \to 0$ in $L^{p^*}(\re^N)$. 
\end{proof}

%%%%%%%%%%%%%%%%%%%%%%%%%%%%%%%%%%%%
\section{Proof of Theorem \ref{Thm:Sob}}\label{S proof}

We use Brezis-Lieb lemma to show Theorem \ref{Thm:Sob}.

\begin{lem}\label{Lemma BL}(Brezis-Lieb \cite{BL lemma})
Let $p \in (0, +\infty)$ and $\{ g_m \}_{m=1}^{\infty} \subset L^p(\Omega)$ satisfy the followings.
\begin{itemize}
\item[(i)] There exists a constant $C$ such that $\| g_m \|_{L^p(\Omega, \mu)} \le C$ for all 
$m \in \mathbb{N}$.
\item[(ii)] $g_m(x) \to g(x)$ a.e. $x \in \Omega$ as $m \to \infty$.
\end{itemize}
Then 
\begin{equation*}
	\lim_{m \to \infty} ( \| g_m \|_{L^p(\Omega, \mu)}^p 
- \| g_m -g \|_{L^p(\Omega, \mu)}^p ) = \| g \|_{L^p(\Omega, \mu)}^p.
\end{equation*}
\end{lem}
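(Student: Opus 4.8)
The plan is to reduce the statement to a single pointwise inequality together with a truncation trick that turns everything into an application of the dominated convergence theorem. First I would record the elementary fact that for every $\ep > 0$ there is a constant $C_\ep > 0$ with
\begin{equation*}
\bigl| \,|a+b|^p - |a|^p\, \bigr| \le \ep\,|a|^p + C_\ep\,|b|^p \qquad (a,b \in \C),
\end{equation*}
which follows by homogeneity from the one-variable estimate for $t \mapsto \bigl| |1+t|^p - |t|^p \bigr|$, which is small relative to $|t|^p$ as $|t| \to \infty$ and bounded on compact sets. Taking $a = g_m - g$ and $b = g$, so that $a + b = g_m$, this controls the cross terms hidden inside $|g_m|^p$.

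Next I would introduce the deficit
\begin{equation*}
f_m := \bigl| \,|g_m|^p - |g_m - g|^p - |g|^p\, \bigr|
\end{equation*}
and observe that the quantity in the statement is dominated by $\int_\Omega f_m \, d\mu$, so it suffices to prove $\int_\Omega f_m \, d\mu \to 0$. Combining the pointwise inequality with the triangle inequality gives $f_m \le \ep\,|g_m - g|^p + (C_\ep + 1)|g|^p$. Since $g_m \to g$ a.e., Fatou's lemma together with hypothesis (i) yields $g \in L^p(\Omega, \mu)$, so the majorant $|g|^p$ is integrable.

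The crux of the argument, and the step needing care, is that $f_m$ itself need not be dominated by a fixed integrable function, because the term $|g_m - g|^p$ is not controlled pointwise. To circumvent this I would set
\begin{equation*}
W_m^\ep := \bigl( f_m - \ep\,|g_m - g|^p \bigr)^+,
\end{equation*}
so that $0 \le W_m^\ep \le (C_\ep + 1)|g|^p \in L^1(\Omega, \mu)$, while $W_m^\ep \to 0$ a.e.\ (at a.e.\ point $g_m \to g$, forcing both $f_m \to 0$ and $|g_m - g|^p \to 0$). The dominated convergence theorem then gives $\int_\Omega W_m^\ep \, d\mu \to 0$.

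Finally, from the pointwise bound $f_m \le W_m^\ep + \ep\,|g_m - g|^p$ and the uniform estimate $\| g_m - g \|_{L^p(\Omega, \mu)}^p \le C'$ (from hypothesis (i) and $g \in L^p$), I would conclude
\begin{equation*}
\limsup_{m \to \infty} \int_\Omega f_m \, d\mu \le \ep\, C'.
\end{equation*}
Since $\ep > 0$ is arbitrary, letting $\ep \downarrow 0$ finishes the proof. The only genuinely delicate point is the choice to truncate at level $\ep\,|g_m - g|^p$: this is exactly what repairs the absence of an a priori integrable dominating function for $f_m$.
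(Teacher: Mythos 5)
Your proof is correct: it is precisely the classical Brezis--Lieb argument (the truncation $W_m^{\varepsilon} = \bigl( f_m - \varepsilon\,|g_m - g|^p \bigr)^+$ dominated by $(C_\varepsilon + 1)|g|^p$, dominated convergence, then $\varepsilon \downarrow 0$), which is exactly the proof given in the reference the paper cites for this lemma. The paper itself does not reprove the lemma but invokes it as a known result, so your argument coincides with the standard one rather than diverging from anything in the paper.
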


\begin{proof}(Theorem \ref{Thm:Sob})
Let $\{ u_k \}_{k \in \N} \subset \dot{W}^{1,p}_{\rm rad}(\re^N)$ be a minimizing sequence of $S_p$. Then, up to a subsequence (we use the same notation for simplicity),
\begin{align*}
\| \nabla u_k \|_p^p \to S_p,\,\| u_k \|_{p^*} =1,\, u_k \rightharpoonup u \,\,{\rm in}\,\, \dot{W}^{1,p}_{\rm rad}(\re^N).
\end{align*}
We see that $u_k \rightharpoonup u$ also in $L^{p^*} (\re^N)$ and for any $R>0$
\begin{align*}
u_k|_{B_R} &\rightharpoonup u|_{B_R} \quad {\rm in}\,\,W^{1,p}(B_R) \,\,{\rm and}\,\,L^{p^*}(B_R),\\
u_k|_{B_R} &\to u|_{B_R} \quad {\rm in}\,\,L^q(B_R) \,\,\text{for}\,\,q < p^*,\\
u_k|_{B_R}(x) &\to u|_{B_R}(x) \quad \text{a.e.}\,\,x \in B_R.
\end{align*}
Since $R>0$ is arbitrary, we get
\begin{align*}
u_k(x) \to u(x) \quad \text{a.e.}\,\,x \in \re^N.
\end{align*}
Besides, we get
\begin{align}\label{a.e.}
\nabla u_k(x) \to \nabla u(x) \quad \text{a.e.}\,\, x \in \re^N.
\end{align}
%We do not need (\ref{a.e.}) when $p=2$. 
We will show (\ref{a.e.}) later. 
Using Lemma \ref{Lemma BL}, we have
\begin{align*}
1=\| u_k \|_{p^*}^p
&= \( \| u_k -u\|_{p^*}^{p^*} + \| u \|_{p^*}^{p^*} \)^{\frac{p}{p^*}} + o(1)\\
&\le \( \| u_k -u\|_{p^*}^{p^*} \)^{\frac{p}{p^*}}  + \( \| u\|_{p^*}^{p^*} \)^{\frac{p}{p^*}} + o(1)\\
&\le S_p^{-1} \( \| \nabla (u_k -u)\|_{p}^{p} + \| \nabla u\|_{p}^{p} \)+ o(1)\\
&=S_p^{-1} \| \nabla u_k \|_{p}^{p} + o(1) =1+ o(1)
\end{align*}
which implies either
\begin{align*}
u\equiv 0\,(\text{\it Concentration}) \,\,\text{or}\,\,u_k \to u\,\,\text{in}\,\,L^{p^*}(\re^N)\,(\text{\it Compactness}),
\end{align*}
where we used the equality condition ($a=0$ or $b=0$) of the elementary inequality: $(a+b)^{q} \le a^q + b^q\,(a, b \ge 0, q\in (0,1))$. 

\noindent
[Case I: $u \not\equiv 0$] Then we get $1=\lim_{k \to \infty} \| u_k \|_{p^*} =\| u \|_{p^*}$. Therefore, 
$$S_p \le \|\nabla u \|^p_p \le \liminf_{k\to \infty} \|\nabla u_k \|^p_p = S_p$$
which implies that $u$ is a minimizer for $S_p$. 

\noindent
[Case II: $u \equiv 0$] Since $\| \nabla (g_{\la_k} u_k) \|_{p}=\| \nabla u_k \|_{p} < \infty$ for any $\la_k >0$, there exists $v \in \dot{W}_{\rm rad}^{1,p}(\re^N)$ such that, up to a subsequence, $g_{\la_k} u_k \rightharpoonup v$ in $\dot{W}_{\rm rad}^{1,p}(\re^N)$. If we assume that $g_{\la_k}u_k \rightharpoonup 0$ in $\dot{W}_{\rm rad}^{1,p}(\re^N)$ for any $\la_k >0$, Lemma \ref{Cocpt} implies that $u_k \to 0$ in $L^{p^*}(\re^N)$ which contradicts $1= \| u_k \|_{p^*}$. Therefore, there exists $\{ \la_k \}$ such that the new minimizing sequence $\{ g_{\la_k} u_k \}$ for $S_p$ satisfies $g_{\la_k} u_k \rightharpoonup v \not\equiv 0$ in $\dot{W}_{\rm rad}^{1,p}(\re^N)$. If we apply the same argument as [Case I], then we see that $v$ is a minimizer for $S_p$. 
\end{proof}

\noindent
If $p=2$, we do not need (\ref{a.e.}) because we get directly 
$$\| \nabla u_k \|_2^2 = \| \nabla (u_k -u) \|^2_2 + \| \nabla u \|_2^2 + o(1)$$
from $u_k\rightharpoonup u$ in $\dot{W}_{\rm rad}^{1,2}(\re^N)$. However, if $p \not= 2$, we need (\ref{a.e.}) to show it. Finally, we show (\ref{a.e.}).

\begin{proof}(Proof of (\ref{a.e.}))
Set
$$J(u)= \| \nabla u \|_p^p - S_p \| u \|_{p^*}^p \ge 0\quad (u \in \dot{W}_{\rm rad}^{1,p}(\re^N)).$$
Then 
$$0= \inf_{u \in \dot{W}_{\rm rad}^{1,p}(\re^N)} J(u) = J(u_k) + o(1)\,\,(k \to \infty).$$
Ekeland's variational principle \cite[Corollary 11]{E} implies that there exists $\{ v_k \} \subset \dot{W}_{\rm rad}^{1,p}(\re^N)$ such that 
\begin{itemize}
\item[(i)] $J(v_k) \le J(u_k)$ for any $k \in \N$,
\item[(ii)] $J'(v_k) \to 0$ in $\( \dot{W}_{\rm rad}^{1,p}(\re^N)  \)^*$ as $k \to \infty$,
\item[(iii)] $\| \nabla (u_k -v_k) \|_p \to 0$ as $k \to \infty$.
\end{itemize}
Since, up to a subsequence, $v_k \rightharpoonup u$ in $\dot{W}_{\rm rad}^{1,p}(\re^N)$ and $\| v_k \|_{p^*} = \| u_k \|_{p^*} + o(1) = 1 + o(1)$, $v_k$ satisfies 
\begin{align*}
o(1) &= \left| \frac{1}{p}\, J'(v_k)[\varphi] \right|\\
&= \left| \int_{\re^N} |\nabla v_k|^{p-2} \nabla v_k \cdot \nabla \varphi \,dx-S_p \int_{\re^N} |v_k |^{p^* -2}v_k \,\varphi \,dx \right| + o(1)
\end{align*}
for any $\varphi \in \dot{W}_{\rm rad}^{1,p}(\re^N)$. For $R, \eta>0$, we consider the test function 
$$\varphi = \phi_R T_{\eta} (v_k -u).$$
Here, $\phi_R$ is a smooth radial function such that $\phi_R \equiv 1$ on $B_R$ and $\phi_R \equiv 0$ on $\re^N \setminus B_{2R}$, and $T_\eta: \re \to \re$ is the truncation at height $\eta$ which is given by
\begin{align*}
T_\eta (s) = s \quad {\rm if}\,\, |s| \le \eta, \quad T_\eta (s) = \eta \,\frac{s}{|s|} \quad {\rm if}\,\, |s| > \eta.
\end{align*}
In the same way as \cite[Proof of Theorem 2.1]{BM}, we can show that $\nabla v_k (x) \to \nabla u(x)$ a.e. $x \in B_R$. Since $R >0$ is arbitrary, we get
\begin{align*}
\nabla v_k (x) \to \nabla u(x) \quad \text{a.e.}\,\, x \in \re^N.
\end{align*}
Since $\| \nabla (u_k -v_k) \|_p \to 0$, up to a subsequence, we have $\nabla (u_k-v_k)(x) \to 0$ a.e. $x \in \re^N$. Therefore, $\nabla u_k (x) \to \nabla u(x)$ a.e. $x \in \re^N$. 
\end{proof}

%The proof of Theorem \ref{Thm:Sob} implies directly the following.

%\begin{cor}\label{Cor:sta}(Qualitative stability of the Sobolev inequality)
%If $\| \nabla u_n \|_p^p - S_p \| u_n \|_{p^*} \to 0$, 
%then $u_n$ converge to a minimizer of $S_p$. 
%\end{cor}

In a similar way, we can show the following theorem. We omit the proof. 

\begin{thm}\label{Thm:HS}(Hardy-Sobolev inequality)
Let $0 < s < p$ and $p^*(s) = \frac{N-s}{N-p}p$. Then
\begin{align*}
\inf \left\{ \,\|\nabla u\|_p^p \,\,\Bigr| \,\, u \in \dot{W}^{1,p}(\re^N), \int_{\re^N} \frac{|u |^{p^*(s)}}{|x|^{s}}\,dx =1  \right\}
\end{align*} 
is attained.
\end{thm}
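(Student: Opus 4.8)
The plan is to follow the proof of Theorem~\ref{Thm:Sob} essentially verbatim, replacing the critical space $L^{p^*}(\re^N)$ throughout by the weighted space $L^{p^*(s)}(\re^N, |x|^{-s}dx)$ and writing $S_{p,s}$ for the infimum in the statement. First I would reduce to radial functions: since the weight $|x|^{-s}$ is radially symmetric and decreasing, the P\'olya--Szeg\H{o} inequality gives $\|\nabla u^*\|_p \le \|\nabla u\|_p$, while the Hardy--Littlewood rearrangement inequality gives $\int_{\re^N} |u|^{p^*(s)}|x|^{-s}\,dx \le \int_{\re^N}(u^*)^{p^*(s)}|x|^{-s}\,dx$, so that $S_{p,s}$ may be computed over $\dot{W}^{1,p}_{\rm rad}(\re^N)$. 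Next I would record the dilation invariance: a change of variables shows that $g_\la$ preserves $\int_{\re^N}|u|^{p^*(s)}|x|^{-s}\,dx$, the relevant exponent count being $\tfrac{N-p}{p}p^*(s) - N + s = 0$. Thus the variational problem carries the same scaling structure as $S_p$, and the cut-off fundamental solution $m_t$, the identity (\ref{gm}) and the representation formula (\ref{rep}) --- all of which involve only the $p$-Laplacian and not $s$ --- remain available unchanged.

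The key point is a Hardy--Sobolev analogue of the cocompactness Lemma~\ref{Cocpt}. Running the proof of Lemma~\ref{Cocpt} word for word, (\ref{gm}) and (\ref{rep}) again yield $\mathrm{ess.sup}_{x\in\re^N}\,|x|^{\frac{N-p}{p}}|u_k(|x|)| \to 0$ whenever $g_{\la_k}u_k \rightharpoonup 0$ for all $\la_k>0$. It then remains to bound the weighted norm, and here the only new ingredient is the exponent bookkeeping
\begin{align*}
\int_{\re^N}\frac{|u_k|^{p^*(s)}}{|x|^s}\,dx
\le \( \mathrm{ess.sup}_{x\in\re^N}\,|x|^{\frac{N-p}{p}}|u_k(|x|)| \)^{p^*(s)-p}\int_{\re^N}\frac{|u_k|^p}{|x|^p}\,dx,
\end{align*}
which holds because $-s-\tfrac{N-p}{p}(p^*(s)-p) = -p$ and $p^*(s)-p = p\,\tfrac{p-s}{N-p} > 0$ (this is precisely where $0<s<p$ enters). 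Applying the Hardy inequality to the last integral, exactly as in Lemma~\ref{Cocpt}, forces $u_k \to 0$ in $L^{p^*(s)}(\re^N, |x|^{-s}dx)$, which is the desired cocompactness.

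With these two facts in hand the proof of Theorem~\ref{Thm:Sob} transfers directly. I would take a radial minimizing sequence, pass to a weak limit $u$, and apply Brezis--Lieb (Lemma~\ref{Lemma BL}, which is already stated for a general measure $\mu$) with $d\mu = |x|^{-s}\,dx$ to obtain the same dichotomy between concentration and compactness via the elementary inequality $(a+b)^{p/p^*(s)} \le a^{p/p^*(s)} + b^{p/p^*(s)}$. In Case~I ($u\not\equiv 0$) weak lower semicontinuity shows $u$ is a minimizer; in Case~II ($u\equiv 0$) the cocompactness lemma above, combined with the dilation invariance of $g_{\la}$, produces a dilated minimizing sequence with a nonzero weak limit, reducing to Case~I. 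The gradient convergence (\ref{a.e.}) is obtained identically via Ekeland's variational principle applied to $J(u) = \|\nabla u\|_p^p - S_{p,s}\(\int_{\re^N}|u|^{p^*(s)}|x|^{-s}dx\)^{p/p^*(s)}$ together with the truncated test function $\phi_R T_\eta(v_k-u)$. I expect the main obstacle to be purely bookkeeping: one must confirm the scaling invariance and the exponent identity above so that the weighted integral again collapses onto the Hardy term, after which every remaining step is insensitive to the presence of the weight $|x|^{-s}$.
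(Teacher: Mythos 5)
Your proposal is correct and is precisely the adaptation the paper has in mind when it says the Hardy--Sobolev case follows ``in a similar way'': the representation formula and cocompactness argument are untouched by the weight, and your exponent checks (scaling invariance $\tfrac{N-p}{p}p^*(s)-N+s=0$ and the collapse $s+\tfrac{N-p}{p}(p^*(s)-p)=p$ onto the Hardy term, using $s<p$) are exactly the bookkeeping needed. The remaining steps (rearrangement, Brezis--Lieb with $d\mu=|x|^{-s}dx$, Ekeland/Boccardo--Murat for gradient a.e.\ convergence) transfer as you describe.
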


%%%%%%%%%%%%%%%%%%%%%%%%%%%%%%%%%%%%%%

\noindent
\textbf{Acknowledgment}: 
%The author would like to thank the referee for constructive comments. 
The author was supported by JSPS KAKENHI Early-Career Scientists, No. 23K13001. 
This work was partly supported by Osaka Central Advanced Mathematical Institute: MEXT Joint Usage/Research Center on Mathematics and Theoretical Physics JPMXP0619217849.

\end{document}